\theoremstyle{plain}
\newtheorem{thm}{Theorem}[section]
\theoremstyle{definition}
\newtheorem{defn}[thm]{Definition}
\newtheorem{prop}[thm]{Proposition}
\newtheorem{rem}[thm]{Remark}
\newtheorem{lem}[thm]{Lemma}
\newtheorem{cor}[thm]{Corollary}
\newtheorem{ex}[thm]{Example}
\newcommand{\Z}{\mathbb{Z}}
\DeclareMathOperator{\LLS}{LLS}
\DeclareMathOperator{\SL}{SL}
\DeclareMathOperator{\sign}{sign}
\title{Generalized Perron Identity for broken lines}
\author{Oleg Karpenkov, Matty van-Son}
\date{30 December 2017}
\address{Oleg Karpenkov\\
University of Liverpool\\
Mathematical Sciences Building\\
Liverpool L69 7ZL, United Kingdom
} \email{karpenk@liv.ac.uk}
\address{Matty van-Son\\
	University of Liverpool\\
	Mathematical Sciences Building\\
	Liverpool L69 7ZL, United Kingdom
} \email{sgmvanso@liverpool.ac.uk}
\thanks
{
O.~Karpenkov is partially supported by EPSRC grant EP/N014499/1 (LCMH)
}
\keywords{Geometry of continued fractions,
Perron Identity, binary quadratic indefinite form}
\begin{document}
\begin{abstract}
In this paper we generalize the Perron Identity for Markov minima.
We express the values of binary quadratic forms with positive discriminant
in terms of continued fractions associated to broken lines passing through the points
where the values are computed.
\end{abstract}

\maketitle
\input epsf
\tableofcontents

\section*{Introduction}
Consider a binary quadratic form $f$ with positive discriminant $\Delta(f)$.
In this paper we give a geometric interpretation and generalization of the {\it Perron Identity} relating
the minimal value of $|f|$ at integer points except the origin and their corresponding continued fractions:
\begin{equation} \label{per eq}
		\min\limits_{\Z^2\setminus \{(0,0)\}}\big|f\big|=\inf\limits_{i\in \Z}\bigg(\frac{\sqrt{\Delta(f)}}{a_i+[0;a_{i+1}:a_{i+2}:\ldots]+[0;a_{i-1}:a_{i-2}:\ldots]}\bigg).
		\end{equation}
Here $[a_0;a_1:\ldots]$ and $[0;a_{-1}:a_{-2}:\ldots]$ 
are regular continued fractions of the slopes of linear factors of corresponding reduced linear forms. Recall that a continued fraction is regular if all its elements are non negative.
We discuss this in more detail further in Section~\ref{Basic notions and definitions}.

\vspace{2mm}

The Perron Identity was shown by A.~Markov in his paper on minima of binary quadratic forms and the Markov spectrum below 3 in~\cite{mar2}.
The statement holds for the entire Markov spectrum (see, e.g., the books by O.~Perron~\cite{per1}, and T.~Cusick and M.~Flahive~\cite{cus1}).
Recently Markov numbers were used in relation to Federer-Gromov's stable norm, 
(\cite{Fock2007,Veselov2017}).
There is not much known about higher dimensional analogue of Markov spectrum.
It is believed to be discrete (which is equivalent to Oppenheim conjecture on best approximations,
see in Chapter 18 of~\cite{oleg1}). Various values of three-dimensional Markov spectrum were constructed 
by H.~Davenport in~\cite{Davenport1938,Davenport1938a,Davenport1939}.

\vspace{2mm}
	
In this paper we show the geometric interpretation of the Perron Identity
in terms of sails of the form (Remark~\ref{RefGeometry})
and generalize this expression in the spirit of integer geometry. This establishes a relationship between non-regular continued fractions and the values of the corresponding binary quadratic form
at any point on the plane (Theorem~\ref{MainTheorem} and Corollary~\ref{MainCorollary}).
The result of this paper is based on recent results of the first author in
geometric theory of continued fractions for arbitrary broken lines, see~\cite{oleg3,oleg2,oleg4,oleg1}.

\vspace{2mm}
	
{\noindent
{\bf Organization of the paper.}
We start in Section~\ref{Basic notions and definitions} with necessary definitions and background.
We discuss reduced forms, LLS sequences, and formulate the classical Perron Identity.
In Section~\ref{thm sec} we formulate and prove the Generalized Perron Identity for finite broken lines.
Finally in Section~\ref{Generalized Perron identity for asymptotic infinite broken lines}
we prove the Generalized Perron Identity for infinite broken lines, and discuss the relation with
the classical the Perron Identity.
}
	
\vspace{2mm}	
	
{\noindent
{\bf Acknowledgement.}
The first author is partially supported by EPSRC grant EP/N014499/1 (LCMH).
}

\section{Basic notions and definitions}
\label{Basic notions and definitions}

In this section we give necessary notions and definitions.
We start in Subsection~\ref{Markov minima and Markov spectrum}
with classical definitions of Markov minima and Markov spectrum.
Further in Subsection~\ref{Reduced forms, and LLS-sequences}
we discuss reduced forms of quadratic binary forms with positive discriminant.
In Subsection~\ref{Classical Perron Identity} we discuss the classical Perron Identity.
Finally in Subsection~\ref{LLS sequences for broken lines}
we introduce LLS sequences for broken lines, which is the central notion in the formulation of the main results.

\subsection{Markov minima and Markov spectrum}
\label{Markov minima and Markov spectrum}
	
Let $f$ be a binary quadratic form with positive discriminant.
Recall that in this case $f$ is decomposable into two real factors, namely
\[
f(x,y)=(ax-b y)(cx-d y),
\]
for some real numbers $a$, $b$, $c$, and $d$.
The discriminant of this form is
\[
\Delta(f)=(ad-bc)^2.
\]
	
The {\it Markov minimum} of the form $f$ is the following number:
\[
m(f)=\min\limits_{\Z^2\setminus\{(0,0)\}}|f|.
\]
The set of all possible values of $\Delta(f)/m(f)$ is called {\it Markov Spectrum}.
(Note that $\Delta(f)/m(f)$ is invariant under multiplication of the form $f$ by a non-zero scalar.)
The spectrum below 3 correspond to special forms with integer coefficients,
we refer an interested reader to an excellent book~\cite{cus1} by T.~Cusick and M.~Flahive on Markov spectrum and related subjects.

\subsection{Reduced forms, and LLS-sequences}
\label{Reduced forms, and LLS-sequences}

It is clear that $m(f)$ is invariant under the action of the group of $\SL(2,\Z)$.
Therefore in order to study the Markov spectrum one can restrict to
so called {\it reduced forms} which are simple to describe.
There are several ways to pick reduced forms, 
although the algorithmic part is rather similar to all of them, it is a subject of a Gauss reduction theory (see, e.g.,~\cite{Lewis1997}, \cite{Manin2002},
\cite{Katok2003}, and~\cite{Karpenkov2010}).

\vspace{2mm}

We consider the following family of {\it reduced forms}.
For every $\alpha\ge 1$ and $1> \beta \ge 0$ set
\[
f_{\alpha,\beta}=(y-\alpha x)(y+\beta x).
\]

Every form is multiple to some reduced form in appropriate basis of integer lattice $\Z^2$.
However such representation is not unique.
The following notion provides a complete invariant distinguishing different classes of reduced forms.

\begin{defn}
Let $\alpha\ge 1$, $1> \beta \ge 0$ and let
\[
\alpha=[a_0;a_1:\ldots] \quad \hbox{and} \quad
\beta=[0;a_{-1}:a_{-2}:\ldots]
\]
be the regular continued fractions for $\alpha$ and $\beta$.
Then the sequence
\[
(\ldots a_{-2},a_{-1},a_{0}, a_1,a_2,\ldots)
\]
is called the {\it LLS sequence} of the form $f_{\alpha,\beta}$.
\end{defn}
This sequence can be either finite or infinite from one or both sides.
The name for the LLS sequence (Lattice Length-Sine sequence) is due its lattice trigonometric properties,
e.g., see in~\cite{oleg2} and~\cite{oleg4}.

\begin{prop}\label{PropEquiv}
Two reduced forms are {\it equivalent} (i.e., multiple to each other after $\SL(2,\Z)$-change of coordinates)
if and only if they have the same LLS sequence up to shifts of sequence by $k$-elements
for some integer $k$ and a reversing of the order of a sequence.
\qed
\end{prop}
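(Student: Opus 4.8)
The plan is to prove the equivalence in two directions, using the dictionary between reduced forms and their LLS sequences. Recall that the form $f_{\alpha,\beta}=(y-\alpha x)(y+\beta x)$ encodes two lines through the origin with slopes $\alpha$ and $-\beta$, and that the LLS sequence records the regular continued fraction expansions of $\alpha$ (indices $\ge 0$) and $\beta$ (indices $<0$). The geometric object underlying all of this is the \emph{sail}: the boundary of the convex hull of integer points inside one of the angles cut out by the two lines. The key fact I would invoke is that the $\SL(2,\Z)$ action permutes these angles and acts on the sails, while the LLS sequence is precisely the sequence of integer lengths and angles (the lattice-trigonometric invariants) read off along a sail. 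This is exactly the content signalled by the paper's remark that LLS stands for Lattice Length--Sine.

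First I would handle the easy direction. Suppose two reduced forms have LLS sequences that agree up to a shift by $k$ and possibly a reversal. A shift of the LLS sequence corresponds to moving the base point of the sail by $k$ lattice vertices, which is realized by an element of $\SL(2,\Z)$ taking one sail vertex to another; a reversal corresponds to swapping the two factors (equivalently, the orientation-reversing symmetry exchanging $\alpha$ and $\beta$), which can be absorbed into an $\SL(2,\Z)$-change of coordinates together with rescaling the form. Concretely, I would exhibit the matrix built from the continued-fraction convergents (a product of the elementary matrices $\left(\begin{smallmatrix} a_i & 1\\ 1 & 0\end{smallmatrix}\right)$) that transports the angle of one form onto the angle of the other, and check that under this transformation $f_{\alpha,\beta}$ becomes a scalar multiple of $f_{\alpha',\beta'}$. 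Since $m(f)$ and the whole LLS data are manifestly invariant under these operations, equivalence follows.

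For the converse, suppose the two forms are equivalent, i.e.\ some $g\in\SL(2,\Z)$ and scalar $\lambda\ne 0$ send one to the other. The strategy is to argue that $g$ must map the sail of the first form to the sail of the second, because the sail is intrinsically determined by the form: it is the boundary of the convex hull of nonzero integer points in an angle between the two factor-lines, and this construction is $\SL(2,\Z)$-equivariant and insensitive to scaling $f$. Since the LLS sequence is a complete lattice-geometric invariant of the sail (lengths and sines are preserved by $\SL(2,\Z)$), the two LLS sequences must coincide as sequences attached to the same sail. The only ambiguity is the choice of base vertex on the sail and the choice of which of the two supplementary angles (and which orientation) one reads, and these ambiguities account for exactly a shift and a reversal. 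Thus the LLS sequences agree up to the stated operations.

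The hard part will be the bookkeeping in the converse direction: pinning down precisely how the finitely many $\SL(2,\Z)$ elements stabilizing or permuting the angles of a reduced form translate into shifts and reversals of the LLS sequence, and ruling out any spurious extra symmetries. In particular I would need to be careful that the reduced-form normalization $\alpha\ge 1$, $1>\beta\ge 0$ does not secretly rigidify the representative in a way that changes the count of admissible shifts, and that the periodic case (where $\alpha$, $\beta$ are quadratic irrationals and the sequence is genuinely bi-infinite periodic) is handled uniformly with the finite and one-sided-infinite cases. Everything else reduces to the standard correspondence between regular continued fractions and sails, which I would cite from the references~\cite{oleg2,oleg4} rather than reprove.
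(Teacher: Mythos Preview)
The paper does not actually prove this proposition: the statement carries a terminal $\blacksquare$, and the Remark immediately following it explicitly skips the proof and refers the reader to~\cite{oleg1}, noting only that it ``follows directly from geometric properties of continued fractions.'' There is therefore no in-paper proof to compare your attempt against.

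That said, your sketch is exactly the geometric approach the Remark alludes to: identify the form with its pair of lines, pass to the sail, and use that the LLS sequence is the $\SL(2,\Z)$-invariant lattice-trigonometric data read off the sail. The outline is correct in spirit, and you have correctly located the genuine work in the bookkeeping of the converse direction---in particular, checking that the ambiguity in choosing a sail, a base vertex, and an orientation accounts for precisely shifts and reversals and nothing more, and that reversal is actually realised within $\SL(2,\Z)$ together with a scalar rather than requiring a determinant~$-1$ map. Those details, and the uniform treatment of the finite, one-sided, and periodic cases, are what the cited reference supplies.
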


\begin{rem}
This statement follows directly from geometric properties of continued fractions.
As we do not use this statement in the proof of the results of this paper we skip the proof here.
We refer an interested reader to~\cite{oleg1}.
\end{rem}

Due to Proposition~\ref{PropEquiv} we can extend the notion of LLS-sequence to 
any binary quadratic form with positive discriminant.
\begin{defn}
Let $f$ be a binary quadratic form with positive discriminant.
The {\it LLS sequence} for $f$ is the {\it LLS sequence} 
for any reduced form $f_{\alpha,\beta}$ equivalent to $f$.
We denote it by $\LLS(f)$.
\end{defn}

\subsection{Classical Perron Identity}
\label{Classical Perron Identity}
We are coming to one of the most mysterious statements in theory of Markov minima.
It is known as the {\it Perron Identity}.

Let $f$ be a binary quadratic form with positive discriminant $\Delta (f)$.
Let also
\[
\LLS(f)=(\ldots a_{-2},a_{-1},a_{0}, a_1,a_2,\ldots)
\]
Then we have the following result by A.~Markov in~\cite{mar2}:
\[	 \frac{m(f)}{\sqrt{\Delta(f)}}=\inf\limits_{i\in\Z}\bigg(\frac{1}{a_i+[0;a_{i+1}:a_{i+2}:\ldots]+[0;a_{i-1}:a_{i-2}:\ldots]}\bigg).
\]

This result is based on the following observation.
Let $\alpha\ge 1$, $1> \beta \ge 0$ and let
\[
\alpha=[a_0;a_1:\ldots] \quad \hbox{and} \quad
\beta=[0;a_{-1}:a_{-2}:\ldots]
\]
be the regular continued fractions for $\alpha$ and $\beta$. Then
\[
f_{\alpha,\beta}(0,1)=\frac{1}{a_0+[0;a_1:a_2:\ldots]+[0;a_{-1}:a_{-2}:\ldots]}.
\]

Our goal is to investigate the lattice geometry behind this expression.
It will lead us to a more general rule relating continued fractions whose elements are arbitrary non zero real numbers, and 
the values of the corresponding binary form at any point on the plane 
(see Theorem~\ref{MainTheorem}, Corollary~\ref{MainCorollary} and Remark~\ref{RefGeometry}).

\subsection{LLS sequences for broken lines}
\label{LLS sequences for broken lines}
We start with the following general definition.
\begin{defn}
Consider a quadratic binary form $f$ with positive discriminant.
A broken line $A_0\ldots A_n$ is an {\it $f$-broken line} if
	the following conditions hold:
	\begin{itemize}
	\item{$A_0,A_n\ne O$ belong to the two distinct loci of linear factors of~$f$;}
	\item{all edges of the broken line are of positive length;}
	\item{for every $k=1,\ldots, n$ the line $A_{k-1}A_k$ does not pass through the origin.}
	\end{itemize}
	\end{defn}

Recall the definition of oriented Euclidean area for parallelograms.	
		
\begin{defn}\label{def}
Consider three points $A$, $B$, $C$ in the plane. Then the determinant for the matrix of vectors $AB$ and $AC$ is called
the {\it the oriented Euclidean area} for the parallelogram spanned by $AB$ and $AC$ and denoted by
\[
\det(AB,AC).
\]
\end{defn}

\begin{defn}
Let $\mathcal A=A_0A_1\ldots A_n$ be a broken line with $A_0,A_n\ne O$.
Then the sign function of the determinant $\det(OA_1,OA_n)$ is called the {\it signature} of $\mathcal A$ with respect to the origin and
denoted by $\sign(\mathcal A)$.
\end{defn}

We conclude this section with the following important definition.

\begin{defn}\label{defLLS}
Given an $f$-broken line $\mathcal A =A_0\ldots A_n$ define
	\[
	\begin{aligned}
	a_{2k}&=\det(OA_k,OA_{k+1}), \quad k=0,\ldots, n;\\
	a_{2k-1}&=\frac{\displaystyle \det(A_kA_{k-1},A_{k}A_{k+1})}{\displaystyle a_{2k-2}a_{2k}} ,
	\quad k=1,\ldots, n.\\
	\end{aligned}
	\]
	The sequence $(a_0,\ldots, a_{2n})$ is called the {\it LLS
	sequence} for the broken line and denoted by $\LLS(\mathcal A)$. 

	The expression $[a_0;\ldots: a_{2n}]$ is said to be the {\it
	continued fraction for the broken line $A_0\ldots
	A_n$}. Note that the values $a_i\ne 0$ may be negative.
	\end{defn}

The LLS sequence encodes the integer angles and integer lengths of the broken line (see~\cite{oleg1}
for further details).

\section{Generalized Perron Identity for finite broken lines}\label{thm sec}
	
Now we are in position to formulate and to prove the main result of this paper.
	
\begin{thm}\label{MainTheorem}{\bf(Generalized Perron Identity: case of finite broken lines.)}
Consider a binary quadratic form with positive discriminant $f$.
Let $\mathcal A=A_0\ldots A_{n+m}$ be an $f$-broken line $($here $n$ and $m$ are arbitrary positive integers$)$, and let
\[
\LLS(\mathcal A)=(a_0,a_1,\ldots,a_{2n+2m}).
\]
Then
\begin{equation}\label{eq1}
f(A_n)=\frac{\sign(\mathcal A) \cdot \sqrt{\Delta(f)}}{a_{2n-1}+[0;a_{2n-2}:\ldots:a_0]+[0;a_{2n}:\ldots:a_{2n+2m}]}.
\end{equation}		
\end{thm}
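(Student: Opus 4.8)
The plan is to strip the statement down to a purely determinantal identity and then evaluate the two one‑sided continued fractions in closed form. Since $f$ has positive discriminant it splits into two real linear factors, and these factors vanish exactly on the lines $OA_0$ and $OA_{n+m}$, because the endpoints of an $f$‑broken line lie on the two distinct loci. Hence there is a constant $\kappa\ne 0$ with $f(P)=\kappa\,\det(OA_0,OP)\det(OA_{n+m},OP)$ for every $P$, and comparison with $f=(ax-by)(cx-dy)$ gives $\sqrt{\Delta(f)}=|\kappa|\,|\det(OA_0,OA_{n+m})|$. Writing $d_{ij}=\det(OA_i,OA_j)$ and evaluating at $P=A_n$, the left–hand side of \eqref{eq1} becomes $f(A_n)=-\kappa\,d_{0,n}\,d_{n,n+m}$. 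Thus it suffices to establish the determinantal identity for the denominator,
\[
a_{2n-1}+[0;a_{2n-2}:\ldots:a_0]+[0;a_{2n}:\ldots:a_{2n+2m}]=\frac{d_{0,n+m}}{d_{0,n}\,d_{n,n+m}},
\]
and then to check that the leftover signs assemble into $\sign(\mathcal A)$.

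For the forward fraction I note that the sub‑broken line $A_n\ldots A_{n+m}$ has $\LLS$ sequence $(a_{2n},\ldots,a_{2n+2m})$, since the entries in Definition~\ref{defLLS} depend only on consecutive vertices. I would then prove, by induction on $m$ using the standard convergent recursion together with the planar Grassmann relation $\det(u,v)\det(w,z)-\det(u,w)\det(v,z)+\det(u,z)\det(v,w)=0$, that the numerator convergent of $[a_{2n};\ldots:a_{2n+2m}]$ telescopes to the single endpoint determinant $d_{n,n+m}$ while its denominator convergent becomes $(d_{n,n+m}-d_{n+1,n+m})/d_{n,n+1}$. Taking reciprocals yields the clean closed form
\[
[0;a_{2n}:\ldots:a_{2n+2m}]=\frac{d_{n,n+m}-d_{n+1,n+m}}{d_{n,n+1}\,d_{n,n+m}}.
\]
The backward fraction is then handled by reversal: a direct computation shows that reversing a broken line negates every $\LLS$ entry, and negating all partial quotients negates a continued fraction, so the backward term equals the negative of the forward formula applied to the reversed line $A_nA_{n-1}\ldots A_0$, namely $(d_{0,n}-d_{0,n-1})/(d_{n-1,n}\,d_{0,n})$.

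To finish the identity I would expand the middle term as $a_{2n-1}=(d_{n-1,n+1}-d_{n-1,n}-d_{n,n+1})/(d_{n-1,n}\,d_{n,n+1})$ and add the three pieces. The isolated summands $1/d_{n-1,n}$ and $1/d_{n,n+1}$ cancel, and two applications of the Grassmann relation (first on the quadruple $OA_0,OA_{n-1},OA_n,OA_{n+1}$, then on $OA_0,OA_n,OA_{n+1},OA_{n+m}$) eliminate the interior vertices $A_{n-1}$ and $A_{n+1}$ and collapse the remaining expression to $d_{0,n+m}/(d_{0,n}\,d_{n,n+m})$, as required. For the sign, the orientation hypotheses defining an $f$‑broken line (endpoints on distinct loci and no edge through $O$) force the vertices to proceed monotonically between the two loci; this pins down the relative signs of $\kappa$, $\det(OA_0,OA_{n+m})$ and $\det(OA_1,OA_{n+m})$ and shows that the surviving factor is precisely $\sign(\mathcal A)$, which combines with the determinant identity to give \eqref{eq1}.

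I expect the main obstacle to be the forward closed form: matching the alternating area/``sine'' definition of the $\LLS$ entries to the continued‑fraction convergent recursion so that the numerator really does telescope, through the Grassmann relation, down to the single endpoint determinant $d_{n,n+m}$. Once that lemma is in place, the backward case is mere reversal bookkeeping and the assembly is the two‑step Grassmann cancellation above; the only other delicate point is the orientation argument fixing the global sign.
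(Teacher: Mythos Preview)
Your route is genuinely different from the paper's. The paper never evaluates the one-sided continued fractions in closed determinantal form; instead it reduces geometrically to a three-vertex broken line. It extends the edges $A_{n-1}A_n$ and $A_nA_{n+1}$ until they meet the two zero loci of $f$ at points $B$ and $C$, obtaining an $f$-broken line $BA_nC$ with $\LLS(BA_nC)=(b_0,a_{2n-1},b_2)$, and then invokes a cited lemma (the continued fraction of a broken line equals the slope of its last vertex in suitable coordinates, Lemma~\ref{geometry2}) to conclude $b_0=[a_{2n-2};\ldots:a_0]$ and $b_2=[a_{2n};\ldots:a_{2n+2m}]$. A direct three-term computation on $BA_nC$ (Lemma~\ref{main lem}, resting on Lemma~\ref{l1}) then finishes. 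Your approach, by contrast, is self-contained and purely algebraic; the determinantal identity
\[
a_{2n-1}+[0;a_{2n-2}:\ldots:a_0]+[0;a_{2n}:\ldots:a_{2n+2m}]
=\frac{d_{0,\,n+m}}{d_{0,n}\,d_{n,\,n+m}}
\]
is correct, and the induction on the convergents via the planar Pl\"ucker relation, the reversal bookkeeping for the backward fraction, and the two final Pl\"ucker cancellations all go through exactly as you sketch. What your method buys is that nothing is quoted as a black box; what the paper's method buys is that once the three-vertex case is done, the general case is a two-line geometric observation.

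The genuine gap is your sign argument. The claim that the $f$-broken-line hypotheses ``force the vertices to proceed monotonically between the two loci'' is false: the only constraint on interior vertices is that consecutive ones are not collinear with $O$, and the paper's own worked example (Figure~\ref{ex 2}) is a broken line that zig-zags freely across the sectors cut out by the kernel. So no monotonicity is available to pin down the relative signs of $\kappa$, $d_{0,\,n+m}$ and $\sign(\mathcal A)$. The paper handles the sign at the three-vertex stage (Lemma~\ref{l1}) by a direct coordinate computation, packaging the sign as the signature of the short broken line $BA_nC$, and then argues that this signature coincides with $\sign(\mathcal A)$ since $B$ lies on the line $OA_0$ and $C$ on the line $OA_{n+m}$. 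If you want to stay within your algebraic framework, drop the monotonicity paragraph and instead verify directly, in coordinates adapted to the factorisation of $f$, that $\sign(\mathcal A)\sqrt{\Delta(f)}=-\kappa\,d_{0,\,n+m}$; this is a statement about $O$, the two endpoints, and the definition of $\sign(\mathcal A)$, and is entirely independent of the behaviour of the intermediate vertices.
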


Let us first consider the following example.
\begin{ex}
Consider the following binary quadratic form
\[
f(x,y)=(x+y)(x-2y).
\]

\begin{figure}
\[\includegraphics[]{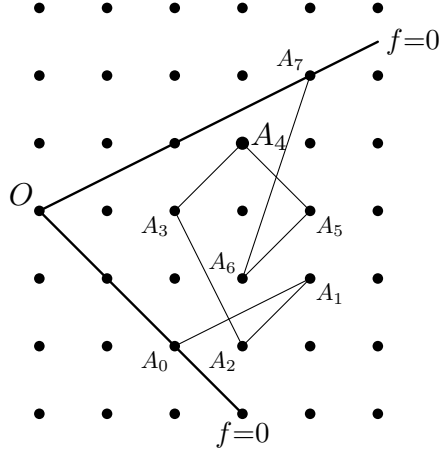}\]
\caption{The kernel of $f$ and the $f$-broken line $\mathcal A$.}\label{ex 2}
\end{figure}			

Let $\mathcal A=A_0\ldots A_7$ be the broken line with vertices
\[
\begin{array}{llll}
A_0=(2,-2), &
A_1=(4,-1), &
A_2=(3,-2), &
A_3=(2,0), \\
A_4=(3,1), &
A_5=(4,0), &
A_6=(3,-1), &
A_7=(4,2),
\end{array}
\]
see Figure~\ref{ex 2}.
Let us check Theorem~\ref{MainTheorem} for the broken line $\mathcal A$ at point $A_4=(3,1)$.
We leave the computations of LLS-sequences to a reader as an exercise, the result is as follows:
\[
\LLS(\mathcal A)=\bigg(6, -\frac{1}{30},  -5,  -\frac{3}{20},  4, \frac{3}{8},    2, -\frac14,-4: \frac{1}{8}: -4: -\frac{1}{20}: 10  \bigg)
\]
(here we denote the elements of $\LLS(\mathcal A)$ by $a_0,\ldots, a_{12}$).
Finally we have $\Delta(f)=9$ and $\sign(\mathcal A)=1$.

According to Theorem~\ref{MainTheorem} we expect the following.
\[
\begin{array}{l}
f(A_4)=
\displaystyle
\frac{\sign(\mathcal A) \cdot \sqrt{\Delta(f)}}{a_{7}+[0;a_{6}:\ldots:a_0]+[0;a_{8}:\ldots:a_{12}]}\\
\displaystyle
=
\frac{1 \cdot 3}
{-\frac{1}{4}+\big[0; 2: \frac{3}{8}: 4: -\frac{3}{20}: -5: -\frac{1}{30}: 6\big]+
\big[0; -4: \frac{1}{8}: -4: -\frac{1}{20}: 10\big]}\\
=4.
\end{array}
\]

Indeed, direct computation shows that
\[
f(A_4)=(3+1)(3-2\cdot 1)=4.
\]

\end{ex}
		
\vspace{2mm}

We start the proof with three lemmas.		

\begin{lem}\label{l1}
Consider a binary quadratic form with positive discriminant $f$.
Let $P\ne O$ and $Q\ne O$ annulate distinct linear factors of $f$.
Then for every point $A$ it holds
\[
f(A)=\sign(POQ)\cdot \frac{\det(OP,OA)\cdot \det(OA,OQ)}{\det(OP,OQ)}\cdot \sqrt{\Delta (f)}.
\]
\end{lem}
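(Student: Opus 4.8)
The plan is to realize both sides as quadratic forms sharing the same pair of linear factors, and then to pin down the single proportionality constant that relates them. Write $f(x,y)=(ax-by)(cx-dy)$, so that the two loci of the linear factors are the lines through $O$ in the directions $(b,a)$ and $(d,c)$, and $\Delta(f)=(ad-bc)^2$. Since $P$ and $Q$ annulate distinct factors, $P$ is a nonzero multiple of one of these directions and $Q$ of the other. Introduce the auxiliary form $g(A)=\det(OP,OA)\cdot\det(OA,OQ)$. Viewed as a function of $A$, each determinant is a linear form vanishing exactly on the line $OP$, respectively $OQ$; as these two lines coincide with the loci of the factors of $f$, the forms $f$ and $g$ have the same pair of linear factors. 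Hence $f=c\,g$ for a unique nonzero constant $c$, and the whole lemma reduces to evaluating $c$.

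First I would compute the magnitude of $c$ through discriminants. Expanding $g$ as a product of two linear forms in $(x,y)$ and reading off its coefficients, a direct calculation gives $\Delta(g)=\det(OP,OQ)^2$. From $f=c\,g$ we obtain $\Delta(f)=c^2\,\Delta(g)$, hence $|c|=\sqrt{\Delta(f)}\,/\,|\det(OP,OQ)|$. Since $\sign(POQ)=\sign(\det(OP,OQ))$, this already matches the absolute value of the claimed coefficient $\sign(POQ)\cdot\sqrt{\Delta(f)}\,/\,\det(OP,OQ)$, so only the sign of $c$ remains to be justified.

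The sign is the delicate point, and I expect it to be the main obstacle. To settle it I would run the computation explicitly: writing $P=\lambda\,(b,a)$ and $Q=\mu\,(d,c)$ with $\lambda,\mu\neq 0$, one checks directly that $\det(OP,OA)=-\lambda\,(ax-by)$ and $\det(OA,OQ)=\mu\,(cx-dy)$, so that $g(A)=-\lambda\mu\,f(A)$, while $\det(OP,OQ)=\lambda\mu\,(bc-ad)$. Combining these identities expresses $c$ purely in terms of $ad-bc$ and $\det(OP,OQ)$ and shows that $\sign(c)$ is governed by $\sign(POQ)$ once $P$ and $Q$ are taken with the orientation induced by the factors of $f$ (equivalently, the orientation for which $g$ and $f$ have the same sign). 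This careful sign bookkeeping is exactly what makes the factor $\sign(POQ)$ appear; the remaining steps are the routine substitutions above, after which writing $ad-bc=\sign(POQ)\sqrt{\Delta(f)}$ yields the stated formula at once.
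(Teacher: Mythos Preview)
Your approach differs from the paper's. The paper reduces to the model form $f_\alpha(x,y)=\alpha xy$, verifies the identity there in coordinates (treating both orderings of $P,Q$ on the coordinate axes), and then observes that every quantity in the formula is invariant under linear maps of determinant~$1$; since any form with positive discriminant is equivalent to some $f_\alpha$ under such a map, this settles the general case. Your route---observe that $g(A)=\det(OP,OA)\det(OA,OQ)$ is a quadratic form with the same pair of linear factors as $f$, so $f=c\,g$, then pin down $c$---is a legitimate alternative, and the discriminant computation $\Delta(g)=\det(OP,OQ)^2$ giving $|c|$ is tidy. The paper's reduction avoids tracking the coefficients of a general factorization; your method is more hands-on but stays in one coordinate system throughout.

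There is, however, a genuine gap in your sign determination. Your explicit calculation correctly yields
\[
f(A)=\frac{g(A)}{\det(OP,OQ)}\,(ad-bc),
\]
obtained by eliminating $\lambda\mu$ from $g=-\lambda\mu\,f$ and $\det(OP,OQ)=\lambda\mu(bc-ad)$. But the final assertion ``$ad-bc=\sign(POQ)\sqrt{\Delta(f)}$'' does not hold in general: from $\det(OP,OQ)=-\lambda\mu\,(ad-bc)$ one has $\sign(POQ)=-\sign(\lambda\mu)\cdot\sign(ad-bc)$, which agrees with $\sign(ad-bc)$ only when $\lambda\mu<0$. The lemma places no such constraint on $P$ and $Q$---they are arbitrary nonzero points on the two kernel lines---so for half of all choices of $(\lambda,\mu)$ your concluding identity is off by a sign. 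The phrase ``once $P$ and $Q$ are taken with the orientation induced by the factors of $f$'' is exactly the missing hypothesis; either that orientation condition must be made explicit and justified, or the $\sign(POQ)$ factor has to emerge from the computation rather than be asserted. As written, the sign step does not close.
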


\begin{ex}
Consider the following binary quadratic form
\[
f(x,y)=(x+y)(x-2y).
\]
\begin{figure}
\[\includegraphics[]{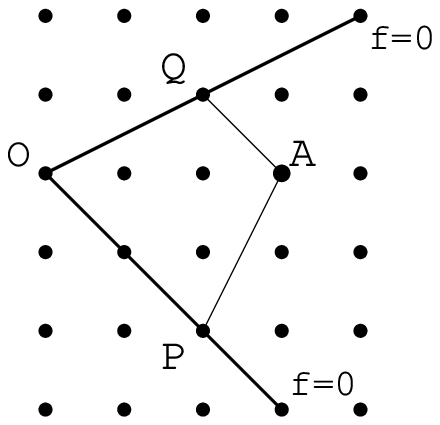}\]
\caption{The kernel of $f$ and the $f$-broken line $PAQ$.}\label{ex 1}
\end{figure}			
			
Let $PAQ$ be an $f$-broken line, with $P=(2,1)$, $A=(3,0)$, and $Q=(2,-2)$, see Figure~\ref{ex 1}.
Direct calculations show that
\[
\begin{array}{lll}
\det(OP,OA)=6,\qquad&
\det(OA,OQ)=3,\qquad&
\det(OP,OQ)=6,
\\
\sign(POQ)=1, \qquad &
f(A)=9,\qquad &
\Delta (f)=9.
\end{array}
\]
Therefore, we have
\[
\begin{aligned}
\sign(POQ)\cdot \frac{\det(OP,OA)\cdot \det(OA,OQ)}{\det(OP,OQ)}\cdot \sqrt{\Delta (f)}&=
1\cdot \frac{6\cdot 3}{6}\cdot \sqrt{9}=
9\\
&=f(A).
\end{aligned}
\]
\end{ex}

\begin{proof}[Proof of Lemma~\ref{l1}]
The statement is straightforward for the form
\[
f_\alpha(x,y)=\alpha xy.
\]
Assume that $P=(p,0)$, $Q=(0,q)$, and $A=(x,y)$.
Then we have
\[
f_\alpha(A)=\alpha xy=
\frac{py\cdot qx}{pq}\cdot \alpha
=\frac{\det(OP,OA)\cdot \det(OA,OQ)}{\det(OP,OQ)}\cdot \sqrt{\Delta(f)}.
\]
For $P=(0,p)$ and $Q=(q,0)$ we have
\[
\begin{aligned}
f_\alpha(A)&=\alpha xy=
\frac{(-px)\cdot (-qx)}{-pq}\cdot \alpha\\
&=-\frac{\det(OP,OA)\cdot \det(OA,OQ)}{\det(OP,OQ)}\cdot \sqrt{\Delta(f)}.
\end{aligned}
\]
This conclude the proof for the case of $f_\alpha$.

\vspace{2mm}

The general case follows from the invariance of the expressions of the equality of the lemma
under the group of linear area preserving  transformations (i.e., whose determinants equal 1) of the plane.
\end{proof}

Now we prove a particular case of Theorem \ref{MainTheorem}.
\begin{lem} \label{main lem}
Let $f$ be a binary quadratic form with positive discriminant.
Consider an oriented $f$-broken line $\mathcal B=B_0B_1B_2$ with
$\LLS(\mathcal B)=(b_0,b_1,b_2)$.
Then
\[
f(B_1)=\frac{\sign(\mathcal B)\cdot\sqrt{\Delta(f)}}{b_1+[0;b_0]+[0;b_2]}.
\]
\end{lem}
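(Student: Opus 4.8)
The plan is to apply Lemma~\ref{l1} directly with the three points of the broken line, and then unwind the definition of the LLS sequence to identify the denominator. Let $P$ and $Q$ be the points where $B_0$ and $B_2$ lie on the two distinct loci of the linear factors of $f$; in fact the $f$-broken line condition forces $B_0$ and $B_2$ to annulate distinct factors, so I would take $P=B_0$ and $Q=B_2$ and the target point $A=B_1$. By Lemma~\ref{l1} applied to $P=B_0$, $Q=B_2$, $A=B_1$, I obtain
\[
f(B_1)=\sign(B_0OB_2)\cdot\frac{\det(OB_0,OB_1)\cdot\det(OB_1,OB_2)}{\det(OB_0,OB_2)}\cdot\sqrt{\Delta(f)}.
\]
Here $\sign(B_0OB_2)$ should agree with $\sign(\mathcal B)=\sign\det(OB_1,OB_2)$ up to the bookkeeping I flag below.

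Next I would translate the three determinants into the quantities $b_0,b_1,b_2$ using Definition~\ref{defLLS}. By definition $b_0=\det(OB_0,OB_1)$ and $b_2=\det(OB_1,OB_2)$, so these are exactly the two numerator factors. The odd term is $b_1=\det(B_1B_0,B_1B_2)/(b_0b_2)$, and the key computation is to express $\det(OB_0,OB_2)$ in terms of $b_0$, $b_1$, $b_2$. The natural route is the additivity/expansion identity for determinants: writing $B_1B_0=OB_0-OB_1$ and $B_1B_2=OB_2-OB_1$, one expands $\det(B_1B_0,B_1B_2)$ and collects the three pairwise determinants $\det(OB_0,OB_1)$, $\det(OB_1,OB_2)$, $\det(OB_0,OB_2)$. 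This yields a linear relation that lets me solve for $\det(OB_0,OB_2)$ as $b_0b_2b_1+b_0+b_2$ (or the appropriate sign variant), which is precisely the denominator $b_1+[0;b_0]+[0;b_2]$ after dividing through by $b_0b_2$, since $[0;b_0]=1/b_0$ and $[0;b_2]=1/b_2$.

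Substituting into the Lemma~\ref{l1} formula, the factor $b_0b_2=\det(OB_0,OB_1)\det(OB_1,OB_2)$ in the numerator cancels against the same factor produced when I rewrite the denominator over the common denominator $b_0b_2$, leaving
\[
f(B_1)=\frac{\sign(\mathcal B)\cdot\sqrt{\Delta(f)}}{b_1+1/b_0+1/b_2}
=\frac{\sign(\mathcal B)\cdot\sqrt{\Delta(f)}}{b_1+[0;b_0]+[0;b_2]},
\]
as claimed.

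I expect the main obstacle to be the sign bookkeeping rather than the algebra. Two issues must be reconciled: first, that $\sign(B_0OB_2)$ appearing in Lemma~\ref{l1} matches the definition $\sign(\mathcal B)=\sign\det(OB_1,OB_2)$; and second, that the determinant expansion produces the denominator with the correct overall sign so that it is genuinely $+b_0b_2b_1+b_0+b_2$ and not a mixed-sign expression. Since the $a_i$ may be negative, I cannot appeal to positivity and must track orientations carefully through the expansion of $\det(B_1B_0,B_1B_2)$, verifying that the cross terms cancel and the surviving term reassembles into $\det(OB_0,OB_2)$ with the sign that makes the cancellation with $\sign(\mathcal B)$ work out. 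Once the orientation conventions are pinned down consistently, the remaining steps are routine determinant manipulations.
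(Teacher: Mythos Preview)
Your approach is essentially identical to the paper's: the paper writes $B_i=(x_i,y_i)$, computes $b_0,b_1,b_2$ in coordinates, and simplifies to obtain
\[
\frac{1}{b_1+[0;b_0]+[0;b_2]}=\frac{\det(OB_0,OB_1)\cdot\det(OB_1,OB_2)}{\det(OB_0,OB_2)},
\]
then invokes Lemma~\ref{l1}; your bilinearity expansion $\det(B_1B_0,B_1B_2)=\det(OB_0,OB_2)-b_0-b_2$ is exactly this computation, done coordinate-free. Regarding your sign worry: the definition of $\sign(\mathcal A)$ as $\sign\det(OA_1,OA_n)$ appears to be a typo for $\sign\det(OA_0,OA_n)$ (this is what Lemma~\ref{l1} and its proof actually use), and with that reading both $\sign(B_0OB_2)$ and $\sign(\mathcal B)$ equal $\sign\det(OB_0,OB_2)$, so the bookkeeping is immediate.
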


\begin{proof}
Set $B_i=(x_i,y_i)$ for $i=0,1,2$.
Then Definition~\ref{defLLS} implies
\[
\begin{aligned}
b_0&=\det(OB_0, OB_1)=x_0y_1-x_1y_0,\\
b_2&=\det(OB_1,OB_2)=x_1y_2-y_1x_2,\\
b_1&=\frac{\det(B_1B_0,B_1B_2)}{b_0b_2}=\frac{x_0y_2-x_2y_0-x_0y_1+x_1y_0-x_1y_2+y_1x_2}{b_0b_2}.
		\end{aligned}
\]
After a substitution and simplification we get
\[
\begin{aligned}
\frac{1}{b_1+[0;b_0]+[0;b_2]}&=&
\frac{(x_0y_1-x_1y_0)(x_1y_2-y_1x_2)}{x_0y_2-x_2y_0}\\
&=&
\frac{\det(OB_0,OB_1)\cdot\det(OB_1,OB_2)}
{\det(OB_1,OB_2)}.
\end{aligned}
\]
Finally recall that
\[
\sign(\mathcal B)=\sign(B_0B_1B_2).
\]
Now Lemma~\ref{main lem} follows directly from Lemma~\ref{l1}.
\end{proof}

For the proof of general case we need the following important result.
\begin{lem}{\bf (\cite[Corollary~$11.11$, p.~144]{oleg1}.)}\label{geometry2}
	Consider a broken line $A_0\ldots A_n$ that has the LLS sequence $(a_0,\ldots,a_{2n})$, with $A_0=(1,0)$,
	$A_1=(1,a_0)$, and $A_n=(x,y)$. Let
	\[
	\alpha=[a_0;a_1:\ldots:a_{2n}]
	\]
	be the corresponding continued fraction for this broken
	line. Then \[
	\frac{y}{x}=\alpha.
	\]
	For the case of an infinite value for $\alpha=[a_0;a_1:\ldots:
	a_{2n}]$,
	 \[
	 \frac{x}{y}=0.\]
\qed
\end{lem}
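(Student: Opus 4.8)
The plan is to show that the slope $y/x$ and the continued fraction $\alpha$ are governed by one and the same second-order linear recurrence with one and the same pair of initial terms, so that they must coincide. The main tool is the matrix presentation of a continued fraction with arbitrary nonzero real entries: writing $M_j=\left(\begin{smallmatrix} a_j & 1 \\ 1 & 0\end{smallmatrix}\right)$, the product $M_0M_1\cdots M_j$ equals $\left(\begin{smallmatrix} p_j & p_{j-1}\\ q_j & q_{j-1}\end{smallmatrix}\right)$, where the convergents $p_j/q_j=[a_0;a_1:\ldots:a_j]$ obey $p_j=a_jp_{j-1}+p_{j-2}$ and $q_j=a_jq_{j-1}+q_{j-2}$ with the usual initialization $p_{-2}=q_{-1}=0$, $p_{-1}=q_{-2}=1$. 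This identity needs neither positivity nor integrality of the $a_j$, so it applies verbatim to the entries occurring in $\LLS(\mathcal A)$.

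First I would extract a three-term recurrence for the vertices directly from Definition~\ref{defLLS}. Since $a_{2k-2}=\det(OA_{k-1},OA_k)\ne 0$, the points $O,A_{k-1},A_k$ are not collinear, so I may write $A_{k+1}=\mu_k A_k+\lambda_k A_{k-1}$ for unique scalars $\mu_k,\lambda_k$, viewing the $A_i$ as position vectors. Substituting this expression into the two defining relations $a_{2k}=\det(OA_k,OA_{k+1})$ and $\det(A_kA_{k-1},A_kA_{k+1})=a_{2k-1}a_{2k-2}a_{2k}$ and expanding by bilinearity of the determinant, each relation becomes a scalar equation carrying the common factor $a_{2k-2}=\det(OA_{k-1},OA_k)$; cancelling it gives the closed forms $\lambda_k=-a_{2k}/a_{2k-2}$ and $\mu_k=1+a_{2k}/a_{2k-2}+a_{2k-1}a_{2k}$. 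Thus the vertices satisfy $A_{k+1}=\mu_kA_k+\lambda_kA_{k-1}$ with these explicit coefficients.

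Next I would produce the matching recurrence on the continued-fraction side. Eliminating the odd-order convergents $p_{2k-1},q_{2k-1}$ from the first-order convergent recursion collapses it to a purely even-order second-order recurrence, and a direct computation shows its coefficients are exactly the $\mu_k,\lambda_k$ found above. Comparing initial data, the prescribed points $A_0=(1,0)$ and $A_1=(1,a_0)$ coincide with $(q_{-2},p_{-2})$ and $(q_0,p_0)$. Since a second-order linear recurrence is determined by its coefficients and its first two terms, induction on $k$ then yields $A_k=(q_{2k-2},p_{2k-2})$ for every $k$; in particular $A_n$ realizes the appropriate even-order convergent, whence $y/x=p/q=\alpha$. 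The degenerate case is immediate: if $\alpha$ is infinite then the relevant $q$ vanishes, so $x=0$ and $x/y=0$.

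I expect the only genuine work to be the algebraic elimination that matches the two recurrences, together with the bookkeeping needed to keep every $a_{2k-2}$ in the denominators nonzero, which is guaranteed by the hypotheses making $\LLS(\mathcal A)$ well-defined; both are routine rather than conceptual. An alternative would be to peel off one end of the broken line and induct through the self-similar identity $\alpha=a_0+1/[a_1;a_2:\ldots]$, but reconciling that with a reparametrised broken line seems less transparent than the recurrence comparison proposed here.
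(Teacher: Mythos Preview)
The paper does not actually prove this lemma: it is quoted from~\cite{oleg1} (Corollary~11.11) and the text immediately after the statement says ``For a proof of Lemma~\ref{geometry2} we refer to~\cite{oleg1}.'' So there is no in-paper argument to compare against.

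That said, your proposed argument is correct and self-contained. The derivation of the vertex recurrence $A_{k+1}=\mu_kA_k+\lambda_kA_{k-1}$ with $\lambda_k=-a_{2k}/a_{2k-2}$ and $\mu_k=1+a_{2k}/a_{2k-2}+a_{2k-1}a_{2k}$ is right (the bilinearity computation checks out), and eliminating the odd convergents from $p_j=a_jp_{j-1}+p_{j-2}$ indeed produces exactly $p_{2k}=\mu_kp_{2k-2}+\lambda_kp_{2k-4}$ (and similarly for $q$). Your initialisation $(q_{-2},p_{-2})=(1,0)=A_0$ and $(q_0,p_0)=(1,a_0)=A_1$ is the standard one, so induction gives $A_k=(q_{2k-2},p_{2k-2})$ and hence $y/x=p_{2n-2}/q_{2n-2}=\alpha$. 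The nonvanishing of each $a_{2k-2}$ that you need for the division is exactly the condition that the LLS sequence be defined (equivalently, that no edge line passes through the origin), so nothing extra is assumed. One small cosmetic point: the paper's indexing of the LLS sequence length contains a typo (compare the example with $A_0\ldots A_7$ and a $13$-term LLS), so the final convergent is $p_{2n-2}/q_{2n-2}$ rather than $p_{2n}/q_{2n}$; this does not affect your argument.
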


For a proof of Lemma~\ref{geometry2} we refer to~\cite{oleg1}.
As a consequence of Lemma~\ref{geometry2} we have the following statement.

\begin{cor}\label{CorGeom}
Consider two broken lines $A_0\ldots A_n$ and $B_0\ldots B_m$ that have the LLS sequences
$(a_0,\ldots,a_{2n})$, and $(b_0,\ldots,b_{2m})$ respectively.
Suppose that the following hold:
\begin{itemize}
\item $A_0=B_0$;
\item the points $A_n$, $B_m$, and $O$ are in a line;
\item the points $A_0=B_0$, $A_1$, and $B_1$ are in a line.
\end{itemize}
Then
\[
[a_0;a_1:\ldots:a_{2n}]=[b_0;b_1:\ldots:b_{2n}].
\]
\end{cor}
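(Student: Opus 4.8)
The plan is to reduce the statement to Lemma~\ref{geometry2} by carrying both broken lines into the normalized position required there by a single common linear map. The crucial observation is that the LLS sequence is assembled entirely from determinants of the shapes $\det(OA_k,OA_{k+1})$ and $\det(A_kA_{k-1},A_kA_{k+1})$, and these are unchanged under any area-preserving linear transformation (determinant $1$), exactly as exploited in the proof of Lemma~\ref{l1}. Hence if I can find $T\in\SL(2,\R)$ sending $A_0=B_0$ to $(1,0)$, the point $A_1$ to a point of the form $(1,a_0)$, and $B_1$ to a point of the form $(1,b_0)$, then the transformed broken lines $T(A_0)\ldots T(A_n)$ and $T(B_0)\ldots T(B_m)$ still carry the sequences $(a_0,\ldots,a_{2n})$ and $(b_0,\ldots,b_{2m})$, and both are then in the precise shape to which Lemma~\ref{geometry2} applies.

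First I would construct $T$. Since $A_0\neq O$, some element of $\SL(2,\R)$ carries $A_0$ to $(1,0)$; composing it with a shear $\bigl(\begin{smallmatrix}1&t\\0&1\end{smallmatrix}\bigr)$, which fixes $(1,0)$, keeps this normalization while letting me move the images of $A_1$ and $B_1$. Because $T$ preserves determinants, $\det\big((1,0),T(A_1)\big)=a_0$ and $\det\big((1,0),T(B_1)\big)=b_0$, so $T(A_1)=(u,a_0)$ and $T(B_1)=(v,b_0)$ for some reals $u,v$, and the shear sends these to $(u+ta_0,a_0)$ and $(v+tb_0,b_0)$. Demanding $u+ta_0=1$ fixes $t=(1-u)/a_0$, which is legitimate since $a_0\neq0$ (the edge $A_0A_1$ avoids the origin). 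The point I would emphasize is that this single value of $t$ then automatically forces $v+tb_0=1$ as well: here the hypotheses $A_0=B_0$ and collinearity of $A_0,A_1,B_1$ enter, since collinearity of $T(A_1)$, $T(B_1)$, $(1,0)$ yields $v-1=\tfrac{b_0}{a_0}(u-1)$, which is exactly the identity needed for consistency.

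With $T$ fixed, Lemma~\ref{geometry2} applies to each normalized broken line: writing $T(A_n)=(x_A,y_A)$ and $T(B_m)=(x_B,y_B)$, I obtain $[a_0;a_1:\ldots:a_{2n}]=y_A/x_A$ and $[b_0;b_1:\ldots:b_{2m}]=y_B/x_B$, with the convention $x/y=0$ in the infinite case. Finally I would invoke the last hypothesis: as $T$ is linear it fixes $O$, so $T(A_n)$, $T(B_m)$, $O$ stay collinear, meaning $(x_A,y_A)$ and $(x_B,y_B)$ lie on a common line through the origin and hence have equal slope (or are both vertical). Therefore the two continued fractions coincide. I expect the only genuine obstacle to be the verification in the second step that a single shear normalizes both first edges simultaneously; everything else is invariance of determinants together with a direct appeal to Lemma~\ref{geometry2}.
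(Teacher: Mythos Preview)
Your proof is correct and follows essentially the same route as the paper: normalize by a linear change of coordinates so that Lemma~\ref{geometry2} applies to both broken lines, and then use collinearity of $A_n$, $B_m$, $O$ to conclude equality of slopes, hence of continued fractions. The paper simply declares the basis $e_1=OA_0$, $e_2=\dfrac{A_0A_1}{|A_0A_1|\,|OA_0|}$ and invokes Lemma~\ref{geometry2}; you instead construct the normalizing map explicitly as an element of $\SL(2,\R)$ (a map to $(1,0)$ followed by a shear) and verify that the single shear parameter forced by $A_1\mapsto(1,a_0)$ automatically sends $B_1$ to $(1,b_0)$ thanks to the collinearity hypothesis on $A_0,A_1,B_1$. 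Your version makes the invariance of the LLS sequence transparent (determinant-$1$ maps preserve all the $2\times2$ determinants defining it) and explicitly justifies why one normalization serves both broken lines, points the paper leaves implicit.
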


\begin{proof}
In coordinates of the basis
\[
e_1=OA_0, \qquad e_2=\frac{A_0A_1}{|A_0A_1||OA_0|}
\]
the coincidence of continued fractions follows from Lemma~\ref{geometry2}.
\end{proof}

\begin{proof}[Proof of Theorem \ref{MainTheorem}]
Let $f$ be a binary quadratic form with positive discriminant.
Denote the linear factors of $f$ by $f_1$ and $f_2$.
Consider an $f$-broken line $\mathcal A= A_0\ldots A_{n+m}$.
Without loss of generality we assume that $A_0$ and $A_{n+m}$ annulate $f_1$ and $f_2$ respectively.

\vspace{2mm}

\begin{figure}
\[\includegraphics[]{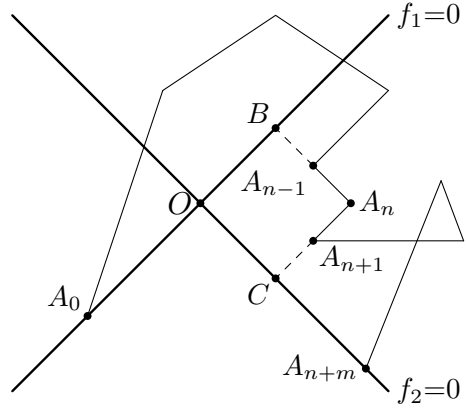}\]
\caption{The original $f$-broken line $\mathcal A$ and the resulting $f$-broken line $BA_nC$.}\label{ex 3}
\end{figure}			

Denote by $B$ the intersection of the line $A_nA_{n-1}$ with the line $f_1=0$.
Denote by $C$ the intersection of the line $A_nA_{n+1}$ with the line $f_2=0$. (See Figure~\ref{ex 3}.)
Then the continued fraction for the broken line $BA_nC$ is $[b_0:a_{2n-1}:b_2]$ for some real numbers $b_0$ and $b_2$.

By Corollary~\ref{CorGeom} we have
\[
\begin{array}{l}
[b_0]=[a_{2n-2};\ldots:a_0];\\
{[}b_2]=[a_{2n};\ldots:a_{2n+2m}].\\
\end{array}
\]
By construction
\[
\sign(BA_nC)=\sign(\mathcal A).
\]
Therefore by Lemma~\ref{main lem} we have
\[
\begin{aligned}
f(A_n)&=\frac{\sign(BA_nC)\cdot\sqrt{\Delta(f)}}{a_{2n-1}+[0;b_0]+[0;b_2]}\\
&=
\frac{\sign(\mathcal A)\cdot \sqrt{\Delta(f)}}
{a_{2n-1}+[0;a_{2n-2}:\ldots:a_0]+[0;a_{2n}:\ldots:a_{2n+2m}]}.
\end{aligned}
\]
This concludes the proof of Theorem~\ref{MainTheorem}.
\end{proof}

\section{Generalized Perron identity for asymptotic infinite broken lines}
\label{Generalized Perron identity for asymptotic infinite broken lines}

In this section we extend the Generalized Perron Identity (of Theorem~\ref{MainTheorem}) to the case of certain infinite broken lines and discuss the relation to the classical Perron Identity.

\vspace{2mm}

We start with the following definition.

\begin{defn}
Consider a binary quadratic form $f$ with positive discriminant.
An infinite in both sides broken line $\ldots A_{-2}A_{-1}A_0A_1A_2 \ldots$ is an {\it asymptotic $f$-broken line} if
the following conditions hold (here we assume that $A_k=(x_k,y_k)$ for every integer $k$):
\begin{itemize}
\item{the two side infinite sequence $\big(\frac{y_n}{x_n}\big)$ converges to different slopes of the linear factors in the kernel of $f$ as $n$ increases and decreases respectively;}
\item{all edges of the broken line are of positive length;}
\item{for every admissible $k$ the line $A_{k-1}A_k$ does not pass through the origin.}
\end{itemize}
\end{defn}

\begin{rem}
Here and below one can consider one side infinite broken lines. All the proofs are similar, so we leave 
this case as an exercise.
\end{rem}

The signature of an asymptotic $f$-broken line is defined as a determinant for two vectors in the kernel of $f$, the first with the starting limit direction
and the second with the end limit direction.

\vspace{2mm}

Finally we have a definition of LLS-sequences similar to Definition~\ref{defLLS}.
\begin{defn}\label{defLLS2}
Given an asymptotic $f$-broken line 
\[\mathcal A =\ldots A_{-2}A_{-1}A_0A_1A_2 \ldots\] 
define
	\[
	\begin{aligned}
	a_{2k}&=\det(OA_k,OA_{k+1}), \quad k\in\Z;\\
	a_{2k-1}&=\frac{\displaystyle \det(A_kA_{k-1},A_{k}A_{k+1})}{\displaystyle a_{2k-2}a_{2k}} ,
	\quad k\in \Z.\\
	\end{aligned}
	\]
	The sequence $(\ldots, a_{-2},a_{-1},a_0, a_1,a_2\ldots)$ is called the {\it LLS
	sequence} for the broken line and denoted by $\LLS(\mathcal A)$.
\end{defn}

Let us extend the Generalized Perron Identity (of Theorem~\ref{MainTheorem}) to the case of asymptotic $f$-broken line.

\begin{cor}\label{MainCorollary}{\bf(Generalized Perron Identity: case of infinite broken lines.)}
Consider a binary quadratic form with positive discriminant $f$.
Let 
\[
\mathcal A=\ldots A_{-2}A_{-1}A_0A_1A_2\ldots 
\] 
be an asymptotic $f$-broken line, and let
\[
\LLS(\mathcal A)=(\ldots a_{-2},a_{-1},a_0,a_1,a_2,\ldots).
\]
Assume also that both continued fractions
\[
[0;a_{-1}:a_{-2}:\ldots]\quad \hbox{and} \quad [0;a_1:a_2:\ldots]
\]
converge.
Then
\begin{equation}
f(A_0)=\frac{\sign(\mathcal A) \cdot \sqrt{\Delta(f)}}{a_{0}+[0;a_{-1}:a_{-2}:\ldots]+[0;a_1:a_2:\ldots]}.
\end{equation}		
\end{cor}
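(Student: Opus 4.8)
The plan is to deduce Corollary~\ref{MainCorollary} from Theorem~\ref{MainTheorem} by a limiting argument, using the finite broken lines obtained by truncating the asymptotic broken line symmetrically about $A_0$. Concretely, for each positive integer $N$ I would form the finite sub-broken line $\mathcal A_N = A_{-N}A_{-N+1}\ldots A_{N-1}A_N$. The first task is to check that each $\mathcal A_N$ is a genuine finite $f$-broken line in the sense of the earlier definition: its edges have positive length and no edge passes through the origin (both inherited directly from the defining conditions of the asymptotic broken line), and its two endpoints $A_{-N}$ and $A_N$ must lie on the two loci of linear factors of $f$. This last point is the one place where $\mathcal A_N$ is \emph{not} literally an $f$-broken line, since $A_{\pm N}$ only \emph{converge} to the kernel directions rather than lying on them.

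To handle this I would apply the same device used in the proof of Theorem~\ref{MainTheorem}: replace the endpoints by their projections onto the kernel lines. That is, let $B_N$ be the intersection of the line $A_{-N}A_{-N+1}$ with the first factor locus $f_1=0$, and let $C_N$ be the intersection of the line $A_{N-1}A_N$ with the second factor locus $f_2=0$. Then $B_N A_{-N+1}\ldots A_{N-1} C_N$ is an honest finite $f$-broken line, and by Corollary~\ref{CorGeom} its LLS sequence agrees with $(\ldots,a_{-1},a_0,a_1,\ldots)$ on all interior terms, with only the two extreme entries modified. Applying Theorem~\ref{MainTheorem} to this broken line at its central vertex $A_0$ (which sits at index $n = N$, so that the middle element of the LLS sequence is $a_{2n-1}=a_{-1+0}$, appropriately reindexed to $a_0$ as the distinguished term) yields
\[
f(A_0)=\frac{\sign(\mathcal A_N)\cdot\sqrt{\Delta(f)}}{a_0+[0;a_{-1}:a_{-2}:\ldots:b_{-N}']+[0;a_1:a_2:\ldots:b_N']},
\]
where $b_{-N}'$ and $b_N'$ denote the modified extreme terms coming from $B_N$ and $C_N$. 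The point is that $f(A_0)$ on the left is fixed, independent of $N$.

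Now I would let $N\to\infty$. By hypothesis the two infinite continued fractions $[0;a_{-1}:a_{-2}:\ldots]$ and $[0;a_1:a_2:\ldots]$ converge, so the two finite continued fractions appearing in the denominator converge to them; the modification at the single extreme term does not affect the limit because convergence of a continued fraction means the tail contribution vanishes. The remaining point is the signature: I would argue that $\sign(\mathcal A_N)$ stabilizes and equals $\sign(\mathcal A)$ for all large $N$. This follows because $\sign(\mathcal A_N)$ is determined by $\det(OA_{-N+1},OA_{N-1})$ (or equivalently $\det(OB_N,OC_N)$), and as $N$ grows the directions of $OA_{-N}$ and $OA_N$ converge to the two limit kernel directions that define $\sign(\mathcal A)$; since a determinant is a continuous function of the directions and the limiting determinant is nonzero (the two kernel directions are distinct), its sign is eventually constant and matches that of the asymptotic broken line.

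The main obstacle I anticipate is precisely the justification of the signature stabilization and of the interchange of limits, i.e.\ confirming that the limiting value of the denominator is exactly $a_0+[0;a_{-1}:a_{-2}:\ldots]+[0;a_1:a_2:\ldots]$ with the correct (nonzero) sign so that no spurious cancellation or sign flip occurs in the limit. The continued-fraction convergence is handed to us by hypothesis, and Theorem~\ref{MainTheorem} does all the structural work for each finite stage, so the real content of the argument is this continuity-and-limit bookkeeping rather than any new geometric input. Once that is settled, passing to the limit on both sides of the displayed identity gives the desired formula for $f(A_0)$ and completes the proof.
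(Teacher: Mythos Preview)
Your limiting strategy is sound, but it differs from the paper's in one essential design choice. You keep the form $f$ fixed and adjust the finite broken line (replacing the endpoints $A_{\pm N}$ by projections $B_N,C_N$ onto the kernel lines); the paper instead keeps the truncated broken line $\mathcal A_N=A_{-N}\ldots A_N$ unmodified and lets the \emph{form} vary, replacing $f$ by the form $\lambda f_{\alpha_N,\beta_N}$ whose kernel passes exactly through $A_{-N}$ and $A_N$. Theorem~\ref{MainTheorem} is then applied to $\mathcal A_N$ as a genuine $f_{\alpha_N,\beta_N}$-broken line, and one passes to the limit using the elementary continuity $f_{\alpha_N,\beta_N}(A_0)\to f(A_0)$, $\Delta(f_{\alpha_N,\beta_N})\to\Delta(f)$, together with stabilisation of the signature.

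The practical payoff of the paper's choice is that the LLS sequence of $\mathcal A_N$ is the \emph{literal truncation} of $\LLS(\mathcal A)$, so the two finite continued fractions in the denominator are exactly the partial convergents of $[0;a_{-1}:a_{-2}:\ldots]$ and $[0;a_1:a_2:\ldots]$, and the convergence hypothesis applies verbatim. In your route the extreme entries $b'_{\pm N}$ are modified, and your one-line claim that ``the tail contribution vanishes'' is not automatic for continued fractions with arbitrary real entries: modifying the last term by an uncontrolled amount can in principle move the value. This is not fatal---you can close the gap using Lemma~\ref{geometry2}/Corollary~\ref{CorGeom} to see that $[a_1;a_2:\ldots:b'_N]$ is in fact \emph{exactly} the slope of the line $f_2=0$ (in the basis attached to $A_0,A_1$), hence independent of $N$, and then use the asymptotic condition on the directions of $A_N$ together with the assumed convergence to identify that constant with $[a_1;a_2:\ldots]$. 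But this is more than the sentence you wrote, and it is precisely the bookkeeping that the paper's ``vary the form'' trick sidesteps. Conversely, your approach avoids having to track the convergence of the discriminant and of the value $f_{\alpha_N,\beta_N}(A_0)$, since $f$ never moves.
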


\begin{proof}
Without loss of generality we consider the form
\[
f=\lambda f_{\alpha,\beta}=\lambda(y-\alpha x)(y+\beta x),
\]
for some nonzero $\lambda$ and arbitrary $\alpha\ne \beta$.

\vspace{2mm}

Let $\mathcal A=\ldots A_{-2}A_{-1}A_0A_1A_2\ldots $ be an asymptotic $f$-broken line,
where $A_k=(x_k,y_k)$ for all integer $k$.
Also we assume that $x_k\ne 0$ for all $k$
(otherwise, switch to another coordinate system, where the last condition holds).

\vspace{2mm}

Set
\[
\begin{array}{c}
\mathcal A_n=A_{-n}\ldots A_{-2}A_{-1}A_0A_1A_2\ldots A_n;\\
\displaystyle
\alpha_n=\frac{y_{-n}}{x_{-n}}; \qquad \beta=\frac{y_{n}}{x_{n}}.
\end{array}
\]

First of all, by definition $\LLS(\mathcal A_n)$ coincides with $\LLS(\mathcal A)$ for all admissible entries.

\vspace{2mm}

Secondly, we immediately have that
\[
\lim\limits_{n\to \infty} \lambda f_{\alpha_n,\beta_n}(A_0)=\lambda f_{\alpha,\beta}(A_0).
\]

Thirdly, the sequence of signatures stabilizes as $n$ tends to infinity. In other words
\[
\lim\limits_{n\to \infty} \sign(\mathcal A_n)=\sign(\mathcal A).
\]

Fourthly,
\[
\lim\limits_{n\to \infty} \Delta(\lambda f_{\alpha_n,\beta_n})=\Delta(\lambda f_{\alpha,\beta}).
\]

Finally since both continued fractions
\[
[0;a_{-1}:a_{-2}:\ldots],\quad \hbox{and} \quad [0;a_1:a_2:\ldots]
\]
converge and by the above four observations we have
\[
\begin{aligned}
f(A_0)&=
\lim\limits_{n\to \infty}\lambda f_{\alpha_n,\beta_n}(A_0)
\\
&=
\lim\limits_{n\to \infty}
\frac{\sign(\mathcal A_n) \cdot \sqrt{\Delta(\lambda f_{\alpha_n,\beta_n})}}{a_{0}+[0;a_{-1}:a_{-2}:\ldots:a_{2-2n}]+[0;a_1:a_2:\ldots:a_{2n-2}]}.
\\
&=
\frac{\sign(\mathcal A) \cdot \sqrt{\Delta(f)}}{a_{0}+[0;a_{-1}:a_{-2}:\ldots]+[0;a_1:a_2:\ldots]}.
\end{aligned}
\]
The second equality holds as it holds for the elements in the limits for every positive integer $n$
by Theorem~\ref{MainTheorem}.

This concludes the proof of the corollary.		
\end{proof}
	
We conclude this paper with the following important remark.

\begin{rem}\label{RefGeometry}{\bf (Lattice geometry of the Perron Identity.)}
Let $f$ be a binary quadratic form with positive discriminant.
Consider an angle in the complement to the kernel of $f$.
The {\it sail} of this angle is the boundary of the convex hull of all integer points inside the angle except the origin. Note that each form $f$ has four angles in the complement,
and, therefore, it has four sails.

\vspace{1mm}

It is important that the sail of any angle in the complement to the set $f=0$ is an asymptotic $f$-broken line,
so the Corollary~\ref{MainCorollary} holds for each of four sails of $f$.
From general theory of geometric continued fractions, the Markov minimum is an
accumulation point of the values at vertices of all sails.

\vspace{1mm}

For every vertex $V_i$ of a sail there exists a reduced form $f_{\alpha_i,\beta_i}$.
with $\alpha_i\ge 1$ and $1\ge \beta_i > 1$ such that $V_i$ corresponds to $(0,1)$.
In particular we have 
\[
f(V_i)=f_{\alpha_i,\beta_i}(0,1).
\]
The point $(0,1)$ is a vertex of the sail for $f_{\alpha_i,\beta_i}$.
Then from general theory of continued fractions (see Part 1 of~\cite{oleg1})
the sequence $\LLS(f_{\alpha,\beta})$ coincides with the $\LLS$ sequence for the sail containing $(0,1)$.

\vspace{1mm}

Hence the expressions in the Perron Identity~(\ref{per eq}) for which the minimum is computed, i.e.,
\[		
\frac{\sqrt{\Delta(f)}}{a_i+[0;a_{i+1}:a_{i+2}:\ldots]+[0;a_{i-1}:a_{i-2}:\ldots]}
\]
for $i=\ldots, -2,-1,0,1,2,\ldots$
correspond to the formula of Corollary~\ref{MainCorollary} 
for vertices $V_i$ of all four sails. 
We consider the vertex $V_i$, with the sail containing it, as an $f$-broken line.
\end{rem}

	\bibliographystyle{plain}
	\bibliography{biblio}

\begin{thebibliography}{10}

\bibitem{cus1}
T.~Cusick and M.~Flahive.
\newblock {\em The Markoff and Lagrange Spectra}.
\newblock American Mathematical Society, 1989.

\bibitem{Davenport1938}
H.~Davenport.
\newblock On the product of three homogeneous linear forms. {I}.
\newblock {\em Proc. London Math. Soc.}, 13:139--145, 1938.

\bibitem{Davenport1938a}
H.~Davenport.
\newblock On the product of three homogeneous linear forms. {II}.
\newblock {\em Proc. London Math. Soc.(2)}, 44:412--431, 1938.

\bibitem{Davenport1939}
H.~Davenport.
\newblock On the product of three homogeneous linear forms. {III}.
\newblock {\em Proc. London Math. Soc.(2)}, 45:98--125, 1939.

\bibitem{Fock2007}
V.~V. Fock and A.~B. Goncharov.
\newblock Dual {T}eichm\"uller and lamination spaces.
\newblock In {\em Handbook of {T}eichm\"uller theory. {V}ol. {I}}, volume~11 of
  {\em IRMA Lect. Math. Theor. Phys.}, pages 647--684. Eur. Math. Soc.,
  Z\"urich, 2007.

\bibitem{oleg2}
O.~Karpenkov.
\newblock Elementary notions of lattice trigonometry.
\newblock {\em Mathematica Skandinavica}, 102:161--205, 2008.

\bibitem{oleg4}
O.~Karpenkov.
\newblock On irrational lattice angles.
\newblock {\em Functional Analysis and Other Mathematics}, 2:221--239, 2009.

\bibitem{Karpenkov2010}
O.~Karpenkov.
\newblock On determination of periods of geometric continued fractions for
  two-dimensional algebraic hyperbolic operators.
\newblock {\em Math. Notes}, 88(1-2):28--38, 2010.
\newblock {R}ussian version: Mat. Zametki, 88(1), (2010), 30--42.

\bibitem{oleg3}
O.~Karpenkov.
\newblock Continued fractions and the second kepler law.
\newblock {\em Manuscripta Mathematica.}, 134:157--169, 2011.

\bibitem{oleg1}
O.~Karpenkov.
\newblock {\em Geometry of Continued Fractions}.
\newblock Springer, 2013.

\bibitem{Katok2003}
S.~Katok.
\newblock Continued fractions, hyperbolic geometry and quadratic forms.
\newblock In {\em M{ASS} selecta}, pages 121--160. Amer. Math. Soc.,
  Providence, RI, 2003.

\bibitem{Lewis1997}
J.~Lewis and D.~Zagier.
\newblock Period functions and the {S}elberg zeta function for the modular
  group.
\newblock In {\em The mathematical beauty of physics ({S}aclay, 1996)},
  volume~24 of {\em Adv. Ser. Math. Phys.}, pages 83--97. World Sci. Publ.,
  River Edge, NJ, 1997.

\bibitem{Manin2002}
Y.~I. Manin and M.~Marcolli.
\newblock Continued fractions, modular symbols, and noncommutative geometry.
\newblock {\em Selecta Math. (N.S.)}, 8(3):475--521, 2002.

\bibitem{mar2}
A.~Markoff.
\newblock Sur les formes quadratiques binaires indefinies. (second m\'emoire).
\newblock {\em Mathematische Annalen}, 17:379--399, 1880.

\bibitem{per1}
O.~Perron.
\newblock {\"U}ber die {A}pproximation irrationaler {Z}ahlen durch rationale
  {II}.
\newblock {\em Heidelberger Akademie der Wissenschaften}, 1921.

\bibitem{Veselov2017}
A.~Sorrentino and A.~P. Veselov.
\newblock {M}arkov {N}umbers, {M}ather's $\beta$ function and stable norm.
\newblock arXiv:1707.03901, 2017.

\end{thebibliography}

\vspace{.5cm}

\end{document}